\providecommand{\abs}[1]{\lvert #1 \rvert}
\newcommand{\E}{\mathbb{E} \,}
\newcommand{\R}{\mathbb{R}}
\newcommand{\N}{\mathbb{N}}
\newcommand{\e}{\mathrm{e}}
\newtheorem{thm}{Theorem}
\newtheorem{lem}[thm]{Lemma}
\theoremstyle{definition}
\newtheorem{Def}[thm]{Definition}
\theoremstyle{remark}
\newtheorem*{Rem}{Remark}
\title{The symmetric property~($\tau$) for the Gaussian measure}
\author{Joseph Lehec
\footnote{
LAMA (UMR CNRS 8050)
Universit\'e Paris-Est.
}}
\date{December 2007}
\begin{document}
\maketitle
\begin{abstract}
We give a proof, based on the Poincar\'e inequality, of the symmetric property~($\tau$) for the Gaussian measure. If $f\colon\R^d\to\R$ is continuous, bounded from below and even, we define $Hf(x)=\inf_{y}  f(x+y)+ \frac{1}{2} |y|^2 $ and we have
\[ \int \e^{-f} \, d\gamma_d  \int \e^{Hf} \, d\gamma_d  \leq  1. \]
This property is equivalent to a certain functional form of the Blaschke-Santal\'o inequality, as explained in a paper by Artstein, Klartag and Milman.
\bigskip

\noindent
Published in Ann. Fac. Sci. Toulouse SŽr. 6, 17 (2) (2008) 357--370.
\end{abstract}
\section{Introduction}
The Blaschke-Santal\'o inequality states that if $K$ is a symmetric convex body of $\R^d$ then
\begin{equation}
\label{classique-santalo}
 \lvert K\rvert_d  \lvert  K^\circ \rvert_d \leq  \lvert D\rvert_d \lvert D^\circ\rvert_d = v_d^2 , 
\end{equation}
where $\lvert \cdot\rvert_d$ stands for the volume, $K^\circ$ is the polar body of $K$, $D$ the Euclidean ball and $v_d$ its volume. It was first proved by Blaschke for dimension $2$ and $3$ and Santal\'o \cite{santalo} extended the result to any dimension. K.~Ball in \cite{these-ball} was the first to prove a functional version of this inequality. Since then, several authors found either improvements or other versions of Ball's inequality, for example Lutwak and Zhang \cite{lz}, Artstein, Klartag and Milman \cite{akm}, or Fradelizi and Meyer \cite{fm}. \\
Ball's inequality implies in particular the following:
if $F$ is a non negative even measurable function on $\R^d$ then
\begin{equation}
\label{ball-fs} 
\int F(x) \, dx  
\int F^\circ (x) \, dx   \leq  (2\pi)^d ,
\end{equation}
where $F^\circ$ is the polar function of $F$:
\[ F^\circ (x) = \inf_{y\in \R^d}   \frac{ \e^{-x\cdot y} }{ F(y) } .\] 
Let $K$ be a symmetric convex body and $N_K$ the associated norm. Defining $F_K =\e^{ - \frac{1}{2} N_K^2  } $, it is easy to see that $(F_K)^\circ = F_{K^\circ}$. Besides, a standard computation shows that $v_d \int F_K = (2\pi)^{d/2} |K|_d$ and similarly for $K^\circ$. Therefore, applying \eqref{ball-fs} to $F_K$, we get \eqref{classique-santalo}. So \eqref{ball-fs} is a functional form of Santal\'o's inequality. \\
In \cite{akm}, Artstein, Klartag and Milman extend this inequality to the non even setting: they prove that \eqref{ball-fs} holds as soon as the barycenter of $F^\circ$ is at $0$ (which is true when $F$ is even). At the end of their paper is an interesting remark on the property~($\tau$). This property was introduced by Talagrand and named after him by Maurey in \cite{propriete-tau}. Here is the definition: let $\mu$  be a probability measure on $\R^d$ and $w$ a weight (which is a non negative function equal to $0$ at $0$). We say that $(\mu,w)$ satisfies property~($\tau$) if for any non negative continuous function $f$, the following inequality holds:
\begin{equation}
\label{tau}
 \int \e^{-f} \, d\mu  
   \int \e^{ f \boxdot w } \, d\mu  \leq  1 , 
\end{equation}
where $f\boxdot w$ is the infimum convolution of $f$ and $w$:
\[ f\boxdot w (x) = \inf_{y\in \R^d} \bigl( f(x+y) + w(y) \bigr) . \]
Clearly, we may replace ``$f\geq 0$'' by ``$f$ bounded from below''. Of course $f\boxdot w (x) \leq f(x)$ (take $y=0$). In order to find $f\boxdot w (x)$, one is allowed to move from $x$ to some $x+y$ such that $f(x+y)$ is smaller than $f(x)$, but the displacement costs $w(y)$. \\
Let $\gamma_d$ be the standard Gaussian measure on $\R^d$. For $x\in\R^d$, we let $\abs{x}$ be its Euclidean norm and we define the quadratic cost $c\colon x \mapsto \abs{x}^2$. Maurey remarked that $(\gamma_d,\tfrac{c}{4} )$ satisfies property~($\tau$). It follows from the famous Pr\'ekopa-Leindler inequality: let $u$,$v$ and $w$ be measurable functions satisfying $w(\frac{x+y}{2})\leq \tfrac{1}{2}(u(x)+v(y))$ for all $x$ and $y$ in $\R^d$, then 
\begin{equation}
\label{prekopa-leindler}
 \Bigl(  \int \e^{-u}  \, dx \Bigr)^\frac{1}{2}  \Bigl( 
              \int \e^{-v} \, dx  \Bigr)^\frac{1}{2}
          \leq    \int \e^{-w}  \, dx  .
\end{equation}
This is a reverse form of H\"older's inequality, we refer to \cite{ball} for a proof and selected applications. Applying this inequality to $u=f+\frac{c}{2}$, $v=-(f\boxdot \frac{c}{4})+\frac{c}{2}$ and $w=\frac{c}{2}$ yields property~($\tau$) for the Gaussian measure. \\
It is pointed out in \cite{akm} that there is a strong connection between property~($\tau$) and the functional version of Santal\'o's inequality. Indeed, applying \eqref{ball-fs} to $F=\e^{ -f-\frac{c}{2} }$ we obtain easily
\[ \int \e^{-f} \, d\gamma_d  
   \int \e^{ f \boxdot \tfrac{ c }{ 2 } } \,
   d\gamma_d   \leq  1 . \]
Hence, if we restrict to even functions we have property~($\tau$) for $(\gamma_d,\frac{c}{2} )$ (we gain a factor $2$ in the cost). This is what we call the \emph{symmetric} property~($\tau$), and our purpose is to prove it directly. We will show that it is related to the eigenvalues of the Laplacian in Gauss space. This will provide a new proof of the functional Santal\'o inequality, and this proof avoids using the usual Santal\'o inequality (for convex sets), which was not the case of Ball's proof. \\
From now on we denote by $H f$ the function
$ f \boxdot \frac{c}{2}$:
\[ H  f (x) = \inf_{y\in \R^d} \bigl( f(x+y) + \tfrac{1}{2} \abs{y}^2 \bigr) . \]
We restate the theorem we want to prove:
\begin{thm}
\label{principal}
Let $f$ be an even, bounded from below and continous function on $\R^d$. Then
\[ \int  \e ^ {-f} \, d\gamma_d  \int \e ^ { H f } \, d\gamma_d 
  \leq 1 . \]
\end{thm}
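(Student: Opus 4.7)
Before diving in, let me motivate the plan by a heuristic computation. Linearising Theorem~\ref{principal} around $f\equiv 0$ --- writing $f=\varepsilon h$ with $h$ even and expanding to order $\varepsilon^{2}$ --- one finds $Hf = \varepsilon h - \tfrac{\varepsilon^{2}}{2}|\nabla h|^{2} + O(\varepsilon^{3})$, and a short bookkeeping shows that the inequality $\int \e^{-f}d\gamma_{d}\int \e^{Hf}d\gamma_{d}\le 1$ reduces at second order to
\[ \mathrm{Var}_{\gamma_{d}}(h) \;\le\; \tfrac{1}{2}\int |\nabla h|^{2}\,d\gamma_{d}. \]
This is exactly the Poincar\'e inequality on the subspace of \emph{even} functions of $L^{2}(\gamma_{d})$: the constant $1/2$ appears because the smallest non-trivial even eigenvalue of the Ornstein--Uhlenbeck operator $-L=-\Delta+x\cdot\nabla$ is $2$, not~$1$. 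Theorem~\ref{principal} should therefore be understood as the global, nonlinear integration of this improved Poincar\'e estimate.

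Accordingly, my plan is to interpolate between $f$ and a constant via the Ornstein--Uhlenbeck semigroup $(P_{t})$. Set $f_{t}=P_{t}f$; because $P_{t}$ commutes with $x\mapsto -x$, $f_{t}$ stays even for every $t\ge 0$, and $f_{t}\to \bar f:=\int f\,d\gamma_{d}$ as $t\to\infty$. Define
\[ \Phi(t) \;=\; \int \e^{-f_{t}}\,d\gamma_{d}\cdot\int \e^{Hf_{t}}\,d\gamma_{d}. \]
Since $Hc=c$ for any constant $c$, one has $\Phi(\infty)=\e^{-\bar f}\e^{\bar f}=1$, so it suffices to show that $\Phi$ is non-decreasing: then $\Phi(0)\le 1$, which is the theorem. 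Differentiating, Gaussian integration by parts (self-adjointness of $L$) gives $\bigl(\int \e^{-f_{t}}d\gamma_{d}\bigr)' = -\int \e^{-f_{t}}|\nabla f_{t}|^{2}\,d\gamma_{d}$, while the envelope theorem together with the definition of $H$ yields $\partial_{t}Hf_{t}(x)=Lf_{t}(T_{t}(x))$, where $T_{t}(x):=x-\nabla Hf_{t}(x)$ is the natural transport map associated with the inf-convolution (indeed $T_{t}$ is the gradient of the convex function $|x|^{2}/2-Hf_{t}$, the Legendre conjugate of $\psi_{t}:=f_{t}+|x|^{2}/2$). A change of variables along $T_{t}$ followed by another integration by parts recasts $\Phi'(t)$ as a single manageable integral.

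The main obstacle is of course showing that this integral is non-negative. I expect $\Phi'(t)/\Phi(t)$ to reduce, after the manipulations above, to a quantity of the form $\tfrac12\int|\nabla g_{t}|^{2}\,d\gamma_{d}-\mathrm{Var}_{\gamma_{d}}(g_{t})$ for a suitable \emph{even} auxiliary function $g_{t}$ fabricated out of $f_{t}$ and $Hf_{t}$, which is then non-negative by the improved Poincar\'e inequality recalled in the first paragraph. The evenness of $g_{t}$, inherited from that of $f_{t}$ along the OU flow, is what makes the sharp constant $1/2$ available, and it is precisely this constant that upgrades Maurey's cost $c/4$ (obtainable from Pr\'ekopa--Leindler without any symmetry assumption) to the sharp symmetric cost $c/2$ of Theorem~\ref{principal}. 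The usual technicalities --- smoothness of $f$, growth at infinity, regularity of $Hf_{t}$ and injectivity/properness of $T_{t}$ --- are handled by a standard regularisation at the start and a passage to the limit at the end.
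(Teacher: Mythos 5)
You have correctly isolated the infinitesimal mechanism behind the theorem: the linearisation around $f\equiv 0$ does reduce to $\mathrm{Var}_{\gamma_d}(h)\le\tfrac12\int|\nabla h|^2\,d\gamma_d$ for even $h$, which is exactly the symmetric Poincar\'e inequality (spectral gap $2$ on the even, mean-zero part of $L^2(\gamma_d)$) that the paper uses as its Lemma~\ref{sym-poincare}. But your globalisation is a plan, not a proof, and the one step that carries all the weight --- the monotonicity of $\Phi(t)=\int\e^{-f_t}d\gamma_d\int\e^{Hf_t}d\gamma_d$ along the Ornstein--Uhlenbeck flow --- is exactly the step you leave as ``I expect $\Phi'(t)/\Phi(t)$ to reduce to $\tfrac12\int|\nabla g_t|^2-\mathrm{Var}(g_t)$ for a suitable $g_t$''. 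No candidate $g_t$ is produced and no computation is given. The obstacles here are not merely the ``usual technicalities'': (a) differentiating $t\mapsto Hf_t(x)$ by the envelope theorem requires the infimum to be attained at a unique, smoothly varying point, which fails for general (non-semiconvex in the right way) $f_t$; (b) your transport map $T_t=\mathrm{Id}-\nabla Hf_t$ is the gradient of a convex function but need not be a diffeomorphism onto a set of full measure, so the change of variables is not free; and (c) even formally, the change of variables introduces a Jacobian factor $\det(\mathrm{Id}-\mathrm{Hess}\,Hf_t)$ whose logarithm contributes second-order (Hessian) terms that do not obviously assemble into a Dirichlet-form-minus-variance expression. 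A monotonicity argument of this flavour can in fact be carried through (this was done, much later and for the Legendre-dual formulation, by quite involved matrix computations), but as written your proposal asserts rather than establishes the key inequality.

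For comparison, the paper globalises the same infinitesimal estimate by an entirely different and more elementary route, which sidesteps every one of the difficulties above: it first proves the inequality up to an error $K\epsilon^3$ for even functions in a class $\mathcal F(C,\epsilon)$ of ``almost flat'' functions (Lemma~\ref{petitepsilon}, which is precisely your second-order expansion made rigorous with explicit remainder control via Gaussian concentration), then tensorises this estimate over $n$ coordinates --- using a Steiner-type symmetrisation $S_2$ to restore evenness coordinate by coordinate, which is the genuinely non-obvious ingredient --- and finally applies the result to $g(x_1,\dotsc,x_n)=f\bigl(\tfrac1{\sqrt n}\sum x_i\bigr)\in\mathcal F_n(C,\tfrac1{\sqrt n})$ and lets $n\to\infty$, so that the accumulated error $(1+Kn^{-3/2})^n\to1$. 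No differentiation of $H$ in a flow parameter is ever needed. If you want to salvage your approach you must either carry out the monotonicity computation in full (including the Jacobian terms) or replace the flow by a tensorisation/central-limit scheme of the paper's kind; in the latter case you will discover that you also need a symmetrisation step to keep the evenness hypothesis available at each stage of the induction.
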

\section{A ``small $\epsilon$'' inequality}
\begin{Def}
Let $C>0$ and $\epsilon\in (0,1]$. Let $\mathcal{F}(C,\epsilon)$ be the class of functions on $\R^d$ satisfying the following properties:
\begin{itemize}
\item[(i)] $f$ is lipschitz with constant $C \epsilon $
\item[(ii)] for any $x$ and $h$ in $\R^d$:
\[ f(x+h) + f(x-h) - 2 f(x) \leq C \epsilon^2 \abs{h}^2 . \]
\end{itemize}
\end{Def}
This class $\mathcal{F}(C,\epsilon)$ is stable under various operations. For example it is clearly a convex set, and, if $f$ is in $\mathcal{F}(C,\epsilon)$ and $u\in \R^d$ then $x\mapsto f(x+u)$ and $x\mapsto f(-x+u)$ are also in $\mathcal{F}(C,\epsilon)$. Besides, if $(f_i)_{i\in I}$ is a family of functions in $\mathcal{F}(C,\epsilon)$ and if for some $x_0$, $\inf_{i\in I} f_i(x_0) > -\infty$, then $f=\inf_i f_i$ is still in $\mathcal{F}(C,\epsilon)$. Indeed, it is then clear that $f$ is nowhere equal to $-\infty$ and that it is $C\epsilon$-lipschitz. Let $x,h \in \R^d$ and let $i_0$ satisfy $f_{i_0} (x) \leq f(x) + \eta$ ; we have
\[ f(x+h) + f(x-h) - 2 f(x) \leq f_{i_0} (x+h) + f_{i_0}(x-h) -2 f_{i_0} (x)
                                    + 2\eta . \]
Applying property (ii) for $f_{i_0}$ and letting $\eta$ go to $0$, we get the result.  \\
Let $X$ be a standard Gaussian vector on $\R^d$, the expectation of a random variable will be denoted by $\E$.
\begin{lem}
\label{petitepsilon}
Let $C>0$, for every $\epsilon \in (0,1]$, for every \emph{even} function $f \in \mathcal{F}(C,\epsilon)$ we have
\[ \E \e^{-f (X) }  \E \e^{ H f (X) }
   \leq 1 +  K  \epsilon^3 , \]
with a constant $K$ \emph{depending on $C$ solely}.
\end{lem}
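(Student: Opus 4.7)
The strategy is a second-order Taylor expansion of both $\e^{-f}$ and $\e^{Hf}$, followed by the application of a sharp Poincar\'e inequality. The key observation is that for \emph{even} functions in $L^2(\gamma_d)$,
\[ \mathrm{Var}_{\gamma_d}(f) \leq \tfrac{1}{2}\E|\nabla f(X)|^2, \]
with constant $1/2$ instead of the usual $1$: indeed the Ornstein--Uhlenbeck operator is diagonalised by the multivariate Hermite polynomials, and an even function has no odd-degree Hermite component, so the smallest non-trivial eigenvalue on this sector is $2$.

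After normalising $\int f\,d\gamma_d = 0$---harmless because $H(f+c)=Hf+c$---condition~(i) and Gaussian concentration give that $f(X)$ is sub-Gaussian with parameter $C\epsilon$, so the moments $\E|f|^k$ and mixed quantities $\E|f|^k\e^{|f|}$ are bounded by quantities depending only on $C$ and $k$, uniformly in $d$. Next, condition~(ii) is exactly the statement that the function $g(x):=f(x)-\tfrac{C\epsilon^2}{2}|x|^2$ is midpoint concave, hence concave by continuity; choosing a supporting hyperplane at any point of differentiability (which exists a.e.\ by Rademacher) yields the one-sided Taylor bound
\[ f(x+y) \leq f(x) + \nabla f(x)\cdot y + \tfrac{C\epsilon^2}{2}|y|^2. \]
Minimising the right-hand side plus $|y|^2/2$ over $y$ gives the key pointwise estimate
\[ Hf(x) \leq f(x) - \tfrac{1}{2}|\nabla f(x)|^2 + K_1\epsilon^4 \]
for a.e.\ $x$, since $|\nabla f|\leq C\epsilon$, with $K_1$ depending only on $C$. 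Consequently $\E\e^{Hf}\leq (1+O(\epsilon^4))\,\E\e^{f-|\nabla f|^2/2}$.

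Finally I Taylor-expand both $\e^{-f}$ and $\e^{f-|\nabla f|^2/2}$ using $|\e^x-1-x-\tfrac{x^2}{2}|\leq \tfrac{|x|^3}{6}\e^{|x|}$; the sub-Gaussian moment bounds make the remainder integrals $O(\epsilon^3)$ dimension-freely. Integrating and multiplying the two expansions, the zeroth-order term is $1$, the linear parts cancel thanks to $\int f\,d\gamma_d = 0$, and the quadratic contribution consolidates into $\mathrm{Var}(f) - \tfrac{1}{2}\E|\nabla f|^2$, which is non-positive by the even-function Poincar\'e inequality. The main obstacle is keeping all error constants dimension-free: the naive choice $f(0)=0$ with $|f|\leq C\epsilon|x|$ does \emph{not} work, because $\E|X|^k\e^{C|X|}$ grows with $d$; one must instead normalise $\E f=0$ and invoke the dimension-free sub-Gaussian tails that Lipschitz functions inherit from $\gamma_d$. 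Once this is in place, the semi-concavity upper bound on $Hf$ and the Taylor expansion become routine.
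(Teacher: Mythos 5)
Your proposal is correct and follows essentially the same route as the paper: normalise $\E f(X)=0$, use Gaussian concentration of Lipschitz functions for dimension-free moment bounds, obtain the pointwise estimate $Hf(x)\leq f(x)-\tfrac{1}{2}\abs{\nabla f(x)}^2+O(\epsilon^4)$ by testing $y\approx-\nabla f(x)$ in the infimum, expand both exponentials to second order with $O(\epsilon^3)$ remainders, and conclude with the Poincar\'e inequality with constant $\tfrac{1}{2}$ on the even (degree $\geq 2$ Hermite) sector. The only, immaterial, difference is that you justify the one-sided Taylor bound via concavity of $f-\tfrac{C\epsilon^2}{2}\abs{x}^2$ and Rademacher, whereas the paper first mollifies $f$ to make it $\mathcal{C}^2$ (which also lets it apply the smooth Poincar\'e lemma directly).
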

The proof of this lemma is based on a Taylor expansion and on a symmetric Poincar\'e inequality.
\begin{lem}
\label{sym-poincare}
Let $f$ be a smooth function in $L_2(\R^d, \gamma_d)$. Assume that $f$ is orthogonal to constants and to linear functions:
\begin{equation}
\label{poincare}
 \E f(X) = 0 \quad \text{and} \quad \E X f(X) = 0 . 
\end{equation}
Then 
\begin{equation}
 \E f(X)^2  \leq  \tfrac{1}{2}  \E \abs{\nabla f(X)}^2 . 
\end{equation}
\end{lem}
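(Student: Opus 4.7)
The plan is to prove this by spectral decomposition of the Ornstein--Uhlenbeck operator $L = \Delta - x\cdot \nabla$, which is symmetric and negative on $L^2(\gamma_d)$. Recall that $L^2(\R^d,\gamma_d)$ decomposes as the orthogonal direct sum of Hermite chaoses $\mathcal{H}_k$ ($k\ge 0$), where $\mathcal{H}_k$ is the closure of polynomials of degree exactly $k$, and $L$ acts on $\mathcal{H}_k$ as multiplication by $-k$. In particular $\mathcal{H}_0$ consists of constants and $\mathcal{H}_1$ of linear functions $x\mapsto v\cdot x$.

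Writing $f=\sum_{k\ge 0} f_k$ with $f_k\in\mathcal{H}_k$, orthogonality gives
\[
\E f(X)^2 \;=\; \sum_{k\ge 0}\|f_k\|_2^2.
\]
On the other hand the Gaussian integration by parts formula yields
\[
\E |\nabla f(X)|^2 \;=\; -\E[f(X)\,Lf(X)] \;=\; \sum_{k\ge 0} k\,\|f_k\|_2^2,
\]
which is the well-known representation behind the standard Gaussian Poincar\'e inequality (giving constant $1$ for mean-zero functions).

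The point of the assumption (\ref{poincare}) is that it is precisely the statement that $f$ is orthogonal in $L^2(\gamma_d)$ to $\mathcal{H}_0\oplus\mathcal{H}_1$: the first condition kills $f_0$ and the second kills $f_1$, since $\E[X_i f(X)] = 0$ means $f$ is orthogonal to every coordinate function. Hence both series above start at $k=2$, and termwise comparison gives
\[
\E f(X)^2 \;=\; \sum_{k\ge 2}\|f_k\|_2^2 \;\le\; \tfrac{1}{2}\sum_{k\ge 2} k\,\|f_k\|_2^2 \;=\; \tfrac{1}{2}\,\E|\nabla f(X)|^2,
\]
using $1\le k/2$ for $k\ge 2$. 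The factor $1/2$ is sharp and attained on degree-$2$ Hermite polynomials.

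The only real issue is justifying the integration by parts and the convergence of the Hermite expansion. Under the smoothness and $L^2$ hypotheses stated (and assuming implicitly enough decay of $\nabla f$, as needed in the application to $\mathcal{F}(C,\epsilon)$), these are routine: one may truncate $f$ to its projections onto $\bigoplus_{k\le N}\mathcal{H}_k$, for which both identities above are elementary, and pass to the limit. No further analytic difficulty is present beyond this orthogonal-projection bookkeeping.
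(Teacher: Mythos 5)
Your proof is correct and is essentially identical to the paper's: both use the spectral decomposition of the Ornstein--Uhlenbeck operator in the Hermite basis (you merely group the Hermite polynomials into chaoses of fixed total degree), the Gaussian integration by parts identity $\E\abs{\nabla f}^2=-\E fLf$, and the observation that the hypotheses kill the degree $0$ and $1$ components so the spectrum starts at $2$.
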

It is a well-known result, we recall its proof for completeness, the reader can also have a look at \cite{B-conj} where a more general statement is proved.
\begin{proof} We consider the sequence $(H_\alpha)_{\alpha \in \N^d}$ of the Hermite polynomials on $\R^d$. It is an orthonormal basis of $L_2(\gamma_d)$ in which the (unbounded) operator 
\[ L\colon f\mapsto \Delta f - x \cdot \nabla f \] 
is diagonal: for all $\alpha$ we have $-LH_\alpha = \abs{\alpha} H_\alpha$, where $\abs{\alpha}=\sum \alpha_i$. We refer to \cite[chapter 2]{pisier} for details. Moreover, we have the following integration by parts formula:
\[ - \E Lf(X)  g(X)  = 
 \E  \nabla f(X)  \cdot  \nabla g(X)  . \]
Let us write $f = \sum f_\alpha H_\alpha$. We have $-Lf= \sum \abs{\alpha} f_\alpha H_\alpha$ and \eqref{poincare} implies that $f_\alpha = 0$ when $\abs{\alpha}=0$ or $1$. Therefore
\[ \E \abs{\nabla f(X)}^2 = - \E Lf(X)  f(X) =
   \sum_{\abs{\alpha}\geq 2} \abs{\alpha} f_\alpha^2 
                           \geq  2  \E f(X)^2  , \]
which concludes the proof.
\end{proof}
\begin{proof}[Proof of Lemma~\ref{petitepsilon}] 
In what follows, $K_1,K_2,\dotsc$ are constants that depend only on $C$. Let $\epsilon \in (0,1]$ and $f\in \mathcal{F}(C,\epsilon)$, even. Replace $f$ by its convolution by some even regular Dirac approximation $(\rho_n)$ (Gaussian for example). Clearly $\rho_n \ast f$ is even and belongs to $\mathcal{F}(C,\epsilon)$. On the other hand, since $f$ is lipschitz, $\rho_n\ast f \rightarrow f$ uniformly, which implies $ H (\rho_n\ast f) \rightarrow Hf $. Moreover, being lipschitz, $f$ is bounded by a multiple of $1+\abs{x}$, and the same holds for $Hf$. Then $\e^{-\rho_n\ast f}$ and $\e^{ H (\rho_n\ast f) }$ are both dominated by some function $x\rightarrow A \e^{B\abs{x}}$. By the dominated convergence theorem, we obtain
\[ \E \e^{-(\rho_n \ast f) (X) }  \E  \e^{ H (\rho_n \ast f) (X) }   \rightarrow 
 \E \e^{-f (X) }  \E  \e^{ H f (X) } , \]
when $n$ goes to infinity. Hence it is enough to prove the inequality when $f$ is $\mathcal{C}^2$. Moreover, the inequality does not change if we add a constant to $f$. Hence we can also assume that $\E f(X) = 0$. When $f$ is $\mathcal{C}^2$, property (ii) implies that $\mathrm{Hess} f (x) \leq C \epsilon^2 \mathrm{Id}$ for any $x$. Hence by Taylor's formula 
\begin{itemize}
\item[(ii')] for any $x$ and $h$, $f(x+h)
      \leq f(x)+ \nabla f(x) \cdot h + \tfrac{C}{2} \epsilon^2 \abs{h}^2$.
\end{itemize}
First we estimate $\E \e^{-f (X)}$. We write
\[ \E \e^{ -f (X) } = \sum_{k=0}^\infty \tfrac{ (-1)^k }{ k! }
       \E f (X)^k. \]
To estimate the moments of $f(X)$, we use the concentration property of the Gaussian measure: if $\phi$ is a real valued, mean $0$ and $1$-lipschitz function on $\R^d$, then $\phi(X)$ is closed to $0$ with high probability. More precisely, for any $t>0$, we have
\[ \mathrm{Prob} \,  \bigl\{ \abs{\phi(X)} \geq t \bigr\}  \leq  2 \e^{-t^2/2} . \]
We refer to \cite[theorem 2.5]{ledoux} for a proof of this statement. This yields that there exists a universal constant $M$ such that for any $p\geq 1$
\[ \Bigl( \E \abs{\phi(X)}^p \Bigr)^{1/p}  \leq  M \sqrt{p}  . \]
Since $f$ is $C\epsilon$-lipschitz, $\frac{f}{C\epsilon}$ is 1-lipschitz, and it has mean $0$ so the preceding inequality applies. Thus, for $k\geq 3$, we can bound
$\abs{ (-1)^k \E f(X)^k }$ from above by $(MC\sqrt{k})^k \epsilon^3$. We obtain
\begin{equation}
\label{un}
 \E  \e^{ -f (X) } \leq 1 + \tfrac{1}{2} \E  f ( X )^2 
 + K_1  \epsilon^3 .
\end{equation}
In the same way, we have
\begin{equation}
\label{aa}
\E  \e^{f(X)} \leq 1 + \tfrac{1}{2} \E  f(X)^2 + K_1  \epsilon^3
                \quad \mathrm{and} \quad
\E  \bigl\lvert  \e^{f(X)} -1 \bigr\rvert \leq K_2 \epsilon^2.
\end{equation}
We now deal with $H f$. Trying $y= - \nabla f( x )$ in the definition of $Hf$, we get 
\[ H f (x)  \leq f ( x - \nabla f( x ) ) + 
\tfrac{1}{2} \abs{\nabla f(x)} ^2 . \]
Applying (ii') to $x$ and $h=-\nabla f(x)$, we obtain
\[ H f (x)  \leq
    f ( x) - \abs{\nabla f(x)}^2 +
        \tfrac{1}{2} \abs{\nabla f(x)}^2 +
                \tfrac{C}{2} \epsilon^2 \abs{\nabla f(x)}^2. \]
Remark that since $f$ is $C\epsilon$-lipschitz, we have $\abs{\nabla f(x)} \leq C \epsilon$ for any $x$. We obtain
\[ H f (x)  \leq f(x) -
\tfrac{1}{2} \abs{\nabla f(x)}^2 + \tfrac{1}{2} C^3  \epsilon^4. \]
Taking the exponential, and using $\abs{\nabla f}\leq C\epsilon$ again, we get
\begin{align*}  
\e^{ H f(x)  } & \leq  \e^{ f(x) } 
      \bigl(1 - \tfrac{1}{2}  \abs{\nabla f(x)}^2 
     + K_3  \epsilon^4 \bigr)  \bigl( 1 + K_4  \epsilon^4  \bigr) \\
& \leq  \e^{f(x)} - \tfrac{1}{2} \abs{\nabla f(x)}^2 + \tfrac{1}{2} ( 1 - \e^{f(x)} )
\abs{\nabla f(x)}^2 + K_5  \e^{f(x)}  \epsilon^4.
\end{align*}
We take the expectation and we use \eqref{aa}, we obtain
\begin{equation} 
\label{deux}
\E  \e^{ H f(X) }  \leq
      1 + \tfrac{1}{2} \E f(X)^{2}
           - \tfrac{1}{2} \E \abs{\nabla f(X)}^2 + K_6  \epsilon^3  .
\end{equation}
Multiplying \eqref{un} and \eqref{deux} we get
\begin{equation}
\label{trois}
\E  \e ^ { -f (X) }   \E  \e ^ {H f (X)} 
     \leq  1 + \E f (X)^2 - \tfrac{1}{2}  
     \E  \abs{\nabla f(X)}^ 2 + K \epsilon^3. 
\end{equation}
Now we use Lemma~\ref{sym-poincare}: since $f$ has mean $0$ and is even, it satisfies \eqref{poincare}, hence $\E f(X)^2 \leq \tfrac{1}{2} \E \abs{\nabla f(X)}^2 $, which concludes the proof.
\end{proof}
\begin{Rem}
Without the assumption ``$f$ even'' the Poincar\'e inequality would only say
\[ \E  f(X)^2   \leq  
       \E  \abs{\nabla f(X)}^2 , \]
and we would be able to prove
\[ \E  \e^{- f (X) }  
    \E  \e^{ (f \boxdot 
     \tfrac{c}{4}) (X) }  \leq 1 + K  \epsilon^3 . \]
\end{Rem}
In order to prove the symmetric property~($\tau$), we need to tensorize Lemma~\ref{petitepsilon}. This requires various technical tools, most of which are well known to specialists. We begin with a very simple property of the class $\mathcal{F}(C,\epsilon)$.
\begin{lem}
\label{detail2}
Let $E_1$ and $E_2$ be Euclidean spaces, and $\mu$ a probability measure on $E_1$. Let $f\colon E_1\times E_2 \rightarrow \R$ be such that for any $x\in E_1$ the
function $y\mapsto f(x,y)$ is in $\mathcal{F}(C,\epsilon)$. We define $\phi$ by
\[ \e^{-\phi(y)} = \int_{E_1} \e^{-f(x,y)} \, d\mu(x) . \]
Then, unless $\phi = -\infty$, $\phi$ belongs to $\mathcal{F}(C,\epsilon)$.
\end{lem}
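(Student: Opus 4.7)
The plan is to verify the two defining properties of $\mathcal{F}(C,\epsilon)$ for $\phi$ separately, translating each into a statement about the integral $\int \e^{-f(x,y)}\,d\mu(x)$.

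For the Lipschitz bound, I would use property~(i) of $f(x,\cdot)$ pointwise in $x$: since $f(x,y') \leq f(x,y) + C\epsilon\abs{y-y'}$, multiplying by $-1$, exponentiating, and integrating against $\mu$ gives
\[ \e^{-\phi(y')} \geq \e^{-C\epsilon \abs{y-y'}}\,\e^{-\phi(y)}, \]
i.e.\ $\phi(y')-\phi(y) \leq C\epsilon\abs{y-y'}$, and by symmetry $\phi$ is $C\epsilon$-Lipschitz.

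The key step is the semi-concavity bound~(ii). Here the natural move is to combine property~(ii) applied to each $f(x,\cdot)$ with the Cauchy--Schwarz inequality. Property~(ii) gives
\[ f(x,y) \geq \tfrac{1}{2}\bigl(f(x,y+h)+f(x,y-h)\bigr) - \tfrac{C}{2}\epsilon^2\abs{h}^2, \]
so after exponentiating and integrating,
\[ \e^{-\phi(y)} \leq \e^{C\epsilon^2\abs{h}^2/2}\int \e^{-f(x,y+h)/2}\,\e^{-f(x,y-h)/2}\,d\mu(x). \]
Applying Cauchy--Schwarz to the right-hand side bounds this by $\e^{C\epsilon^2\abs{h}^2/2}\,\e^{-\phi(y+h)/2}\,\e^{-\phi(y-h)/2}$. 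Taking logarithms yields exactly $\phi(y+h)+\phi(y-h)-2\phi(y)\leq C\epsilon^2\abs{h}^2$.

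The only real obstacle is the semi-concavity step; everything else is routine. The assumption that $\phi$ is not identically $-\infty$ ensures $\phi(y)$ is finite for every $y$ (by the Lipschitz bound just proved), so there is no integrability issue, and both estimates hold without further assumptions on $\mu$.
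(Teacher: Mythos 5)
Your proof is correct and follows essentially the same route as the paper: the Lipschitz bound by pointwise exponentiation of property (i), and the semi-concavity bound by exponentiating property (ii) with a factor $\tfrac12$, integrating against $\mu$, and applying Cauchy--Schwarz. The paper's own proof is exactly this Cauchy--Schwarz argument, with property (i) left as "simple."
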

\begin{proof} Property (i) is simple. We check (ii): multiplying by $-\tfrac{1}{2}$ the inequality $f(x,y+h)+f(x,y-h) \leq 2f(x,y) + C \epsilon^2 \abs{h}^2$, taking the exponential and integrating with respect to $\mu$, we obtain
\[ \int_{E_1} \e^{-f(x,y)} \, d\mu(x)  \e^{ -\tfrac{C\epsilon^2}{2} \abs{h}^2 } 
   \leq \int_{E_1} \e^{-\tfrac{1}{2} f(x,y+h)} 
           \e^{-\tfrac{1}{2} f(x,y-h)}\, d\mu(x) .\]
Then we use Cauchy-Schwarz to bound the right hand side, we take the log and we end up with the desired inequality for $\phi$.
\end{proof}
\section{Symmetrisation}
An important tool in geometry is Steiner's symmetrisation, see \cite{berger} for definition, properties and applications. In particular, it is the main idea behind Meyer and Pajor's proof of the Blashke-Santal\'o inequality. It is thus natural, and as far as we know this idea dates back to \cite{akm} to introduce functional analogues of this symmetrisation.
\begin{Def}
Let $E$ be a Euclidean space and $f\colon E \rightarrow \R$ be continuous. We define
\[  S f\colon x\mapsto \inf_{u\in E}  \Bigl( 
           \tfrac{1}{2} \bigl( f(u+x)+f(u-x) \bigr) + 
  \tfrac{1}{2} \abs{u}^2   \Bigr) . \]
The function $S f$ is even. \\
Let $E_1,E_2$ be Euclidean spaces and $g\colon E_1\times E_2\rightarrow \R$. We define $S_1 g$ and $S_2 g$ to be the symmetrisations of $g$ with respect to the first and the second variable, respectively:
\[  S_1 g\colon x,y \mapsto \inf_{u\in E_1}  \Bigl( 
           \tfrac{1}{2} \bigl( f(u+x,y)+f(u-x,y) \bigr) + \tfrac{1}{2} \abs{u}^2 
                 \Bigr) , \]
and similarly for $S_2$.
\end{Def}
\begin{lem}
\label{detail3}
Let $\gamma$ be the normal distribution on $E$. For any continuous $f$ on $E$, we have
\begin{equation}
\label{alpha}
 \int_E \e^{ -f } d\gamma \leq \int_E \e^{ - Sf } d\gamma .
\end{equation}
Let $\gamma_1,\gamma_2$ be the normal distributions on $E_1$ and $E_2$, respectively. Any continuous $g$ on $E_1\times E_2$ satisfies
\begin{equation}
\label{beta}
\int_{E_1\times E_2} \e^{ H g } d\gamma_1 \otimes d\gamma_2 
\leq \int_{E_1\times E_2} \e^{ H S_2 g } d\gamma_1 \otimes d\gamma_2 .
\end{equation}
\end{lem}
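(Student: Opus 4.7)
Both inequalities should reduce to Prékopa--Leindler (PL) via the parallelogram identity, which hides a half inf--convolution structure inside the symmetrization operator $S$.

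For~\eqref{alpha}, I would set $G(x):=f(x)+\tfrac{1}{2}|x|^2$ and make the change of variables $a=u+x,\ b=u-x$ in the definition of $Sf(x)+\tfrac{1}{2}|x|^2$. The parallelogram law $|u|^2+|x|^2=\tfrac{1}{2}|a|^2+\tfrac{1}{2}|b|^2$ then gives
\[
Sf(x)+\tfrac{1}{2}|x|^2 \;=\; \inf_{a-b=2x}\tfrac{1}{2}\bigl(G(a)+G(b)\bigr) \;=\; \inf_{(a+b')/2=x}\tfrac{1}{2}\bigl(G(a)+\check G(b')\bigr),
\]
with $\check G(y):=G(-y)$ and $b'=-b$. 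This is precisely the half inf--convolution of $G$ with $\check G$, so PL at parameter $\lambda=1/2$ yields
\[
\int_E \e^{-Sf(x)-\tfrac{1}{2}|x|^2}\,dx \;\ge\; \sqrt{\int \e^{-G}\,\int \e^{-\check G}} \;=\; \int \e^{-G},
\]
the last step by reflection invariance of Lebesgue measure. Dividing by $(2\pi)^{d/2}$ gives~\eqref{alpha}.

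For~\eqref{beta} I would imitate this argument on the product space, with the parallelogram substitution and the PL step applied only in the second coordinate. Setting $\Phi(x,y):=g(x,y)+\tfrac{1}{2}|x|^2+\tfrac{1}{2}|y|^2$, the change of variables $a=(x+u,\,y+v)$ and a direct Legendre computation give
\[
\int \e^{Hg}\,d\gamma_1\otimes d\gamma_2 \;=\; (2\pi)^{-d/2}\int \e^{-\Phi^*(w,z)}\,dw\,dz,
\]
where $\Phi^*$ is the Legendre transform on $E_1\times E_2$, and the analogous identity holds for $HS_2 g$ with $\widetilde\Phi(x,y):=S_2 g(x,y)+\tfrac{1}{2}|x|^2+\tfrac{1}{2}|y|^2$. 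Applying the parallelogram identity in the $y$-variable alone gives, fiberwise in $x$,
\[
\widetilde\Phi(x,y) \;=\; \inf_{a-b=2y}\tfrac{1}{2}\bigl(\Phi(x,a)+\Phi(x,b)\bigr),
\]
so $\widetilde\Phi(x,\cdot)$ is the half inf--convolution of $\Phi(x,\cdot)$ with its $y$-reflection $\Phi(x,-\cdot)$. The plan is then to apply PL fiberwise in the $y$-variable (the two integrands being equal by reflection symmetry) and integrate the resulting pointwise-in-$x$ inequality against $d\gamma_1(x)$.

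The main obstacle is the bookkeeping after passing to Legendre-transform form: the fiberwise PL estimate has to be glued with the $x$-integration without losing normalization, and one must check that the reflection in the second variable propagates cleanly through the joint Legendre transform so that the inequality goes in the direction claimed. The parallel with~\eqref{alpha} --- same parallelogram substitution, same half inf--convolution with a reflected copy, same PL with $\lambda=1/2$ --- strongly suggests that the right setup will make \eqref{beta} drop out along the same lines.
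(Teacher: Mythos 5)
Your argument for \eqref{alpha} is correct and is essentially the paper's own: the parallelogram identity exhibits $Sf+\tfrac{1}{2}\abs{\cdot}^2$ as the $\lambda=\tfrac12$ infimal convolution of $G=f+\tfrac{1}{2}\abs{\cdot}^2$ with its reflection, and Pr\'ekopa--Leindler together with the reflection invariance of Lebesgue measure concludes.

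For \eqref{beta} there is a genuine gap, and it sits exactly where you flag the ``main obstacle''. The fiberwise identity $\widetilde\Phi(x,y)=\inf_{a-b=2y}\tfrac12\bigl(\Phi(x,a)+\Phi(x,b)\bigr)$ concerns $\widetilde\Phi$ itself, not its Legendre transform; running Pr\'ekopa--Leindler in the $y$-variable at fixed $x$ on this identity only yields $\int\e^{-S_2g}\,d\gamma_1\otimes d\gamma_2\ge\int\e^{-g}\,d\gamma_1\otimes d\gamma_2$, i.e.\ a second copy of \eqref{alpha}; it says nothing about $\int\e^{HS_2g}$. The entire difficulty of \eqref{beta} is that $S_2$ does \emph{not} commute with the joint Legendre transform: what is true instead is a duality inequality in which the symmetrisation changes variables, namely (for $g$ even) $\widetilde\Phi^*(x,y)\le\inf_{t\in E_1}\tfrac12\bigl(\Phi^*(x+t,y)+\Phi^*(x-t,y)\bigr)$, which in the paper's notation is exactly $-S_1(-Hg)\le HS_2g$, inequality \eqref{steiner}. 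This is proved by unfolding all the infima and using $Hg(a)-g(b)\le\tfrac12\abs{a-b}^2$ twice, and it is here --- not in any parallelogram step --- that the evenness of $g$ is used, a hypothesis your plan never invokes. Once this pointwise inequality is in hand, the Pr\'ekopa--Leindler step must be performed in the \emph{first} variable, fiberwise in $y$ (equivalently, one applies \eqref{alpha} to $-Hg(\cdot,y)$ on $E_1$); your intention to apply it ``only in the second coordinate'' points in the wrong direction. This interchange of variables is the functional analogue of the key step in Meyer and Pajor's proof that Steiner symmetrisation does not decrease the volume of the polar body, and it is the step your proposal is missing.
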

\begin{proof} Set $\tilde {f}\colon x\rightarrow f(-x)$. By definition of $Sf$, we have for every $z$ and $u\in\R^d$
\begin{align*}
Sf(z) + \tfrac{1}{2} \abs{z}^2 & \leq 
      \tfrac{1}{2} f (u+z) + \tfrac{1}{2} f (-z+u) +
        \tfrac{1}{2} \abs{u}^2 + \tfrac{1}{2} \abs{z}^2 \\
 & =  \tfrac{1}{2} \bigl( f(u+z)+ \tfrac{1}{2} \abs{u+z}^2 ) +
         \tfrac{1}{2} \bigl( \tilde{f} (z-u) + \tfrac{1}{2} \abs{z-u}^2 \bigr) .
\end{align*}
Let $x,y\in \R^d$, we apply this inequality to $z=\frac{x+y}{2}$ and $u=\frac{x-y}{2}$. We obtain
\[ Sf( \frac{x+y}{2} ) + \tfrac{1}{2} \bigl\lvert \frac{x+y}{2} \bigr\rvert^2 \leq
       \tfrac{1}{2} \bigl( f(x)+ \tfrac{1}{2} \abs{x}^2 ) +
        \tfrac{1}{2} \bigl( \tilde{f} (y) + \tfrac{1}{2} \abs{y}^2 \bigr) . \]
Applying the Pr\'ekopa-Leindler inequality we get \eqref{alpha}. \\
Inequality \eqref{beta} is the functional version of the following: if $K$ is a centrally symmetric convex body and $K_u$ its Steiner symmetral with respect to some direction $u$ then $\lvert  (K_u)^\circ \rvert \geq \lvert  K^\circ \rvert$. This is the key argument in Meyer and Pajor's proof of the symmetric Santal\'o inequality, see \cite{meyer-pajor}. Actually this idea dates back to Saint-Raymond \cite{saint-raymond} although he considered a symmetrisation of his own instead of Steiner's. Let us prove \eqref{beta}. It follows from
\begin{equation}
\label{steiner}
-S_1(-Hg) \leq H S_2 g .
\end{equation}
Indeed, combining it with \eqref{alpha} we get for every $y\in E_2$
\begin{align*} 
\int \e^{ H g (x,y)} \, d\gamma_1 (x) & = 
      \int \e^{-(-Hg)(x,y)} \, d\gamma_1 (x) \\
   & \leq  \int \e^{ - S_1(- H g(x,y) ) } \, d\gamma_1 (x)
     \leq  \int \e^{ H S_2 g(x,y) } \, d\gamma_1 (x) , 
\end{align*}
which implies \eqref{beta}. Proof of \eqref{steiner}: notations are heavy but it is straightforward. 
\[ \begin{split}
2 H S_2 g (x,y) 
       & =  \inf_{u,v}  \bigl\{  2 S_2 g(x+u,y+v) +
              \abs{u}^2 + \abs{v}^2  \bigr\}    \\
       &  = \inf_{u,v,w} \bigl\{  g(x+u,w+y+v)+g(x+u,w-y-v) \\ 
       & \quad \qquad  + \abs{u}^2+ \abs{v}^2+ \abs{w}^2 \bigr\}  .
\end{split}  \]
Since $g$ is even, the latest is the same as   
\[ \inf_{u,v,w}  \bigl\{  g(x+u,w+y+v)+g(-x-u,-w+y+v)+
            \abs{u}^2+ \abs{v}^2+ \abs{w}^2  \bigr\} . \]
Whereas
\begin{align*}
-2 S_1 (-H g) (x,y) & =   - \inf_{t}  \bigl\{  (-Hg)(t+x,y)+
               (-Hg)(t-x,y)+\abs{t}^2  \bigr\} \\
       & =  \sup_{t}  \bigl\{ Hg(t+x,y)+ Hg(t-x,y)-\abs{t}^2  \bigr\} .
\end{align*}
Hence we have to prove that for any $t,u,v,w$
\[ \begin{split}
Hg (t+x,y) + Hg (t-x,y) & \leq  g (x+u,w+y+v) + g (-x-u,-w+y+v) \\
   &  + \abs{u}^2 + 
    \abs{v}^2 + \abs{w}^2 + \abs{t}^2 ,
\end{split} \]
which is clear: use twice $Hg(a)- g(b) \leq \frac{1}{2} \abs{b-a}^2 $.
\end{proof}
\section{Tensorisation}
\label{sectens}
\begin{Def}
Let $n \in \N^*$, we define the class $\mathcal{F}_n (C,\epsilon)$ by induction on $n$: let $f$ be a function on $(\R^d)^n$, we say that $f$ belongs to $\mathcal{F}_n (C,\epsilon)$ if 
\begin{itemize}
\item[-] for every $y \in \R^d$, the function $x\in (\R^d)^{n-1} \mapsto f(x,y)$ is in $\mathcal{F}_{n-1}(C,\epsilon)$
\item[-] for every $x\in (\R^d)^{n-1}$, the function $y \in \R^d \mapsto f(x,y)$ is in $\mathcal{F}(C,\epsilon)$.
\end{itemize}
\end{Def}
In other words $\mathcal{F}_n (C,\epsilon)$ is the class of functions on $\R^d\times\dotsb \times \R^d$ that belong to $\mathcal{F}(C,\epsilon)$ with respect to each coordinate separately. The crucial point is that the class $\mathcal{F}_n (C,\epsilon)$ is stable under symmetrisation.
\begin{lem}
\label{detail1}
Let $f$ belong to $\mathcal{F}_n (C,\epsilon)$, set $E_1 = (\R^d)^{n-1}$ and $E_2 = \R^d$. Then $S_2 f$ also belongs to $\mathcal{F}_n (C,\epsilon)$.
\end{lem}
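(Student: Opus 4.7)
The plan is to verify the two defining conditions for $S_2 f \in \mathcal{F}_n(C,\epsilon)$ separately, reducing each to a closure property of $\mathcal{F}(C,\epsilon)$ (respectively $\mathcal{F}_{n-1}(C,\epsilon)$) that is already in hand. For $u \in \R^d$ set
\[ g_u(x,y) = \tfrac{1}{2}\bigl( f(x, y+u) + f(x, -y+u) \bigr) + \tfrac{1}{2} \abs{u}^2, \]
so that $S_2 f(x,y) = \inf_u g_u(x,y)$.

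First I would handle the condition in the second variable. Fix $x \in (\R^d)^{n-1}$; by assumption $y\mapsto f(x,y)$ lies in $\mathcal{F}(C,\epsilon)$. Since $\mathcal{F}(C,\epsilon)$ is stable under translation and sign-reversal of the argument and is convex, the function $y\mapsto g_u(x,y)$, being a half-sum of two such functions plus a constant, lies in $\mathcal{F}(C,\epsilon)$ for every $u$. The Lipschitz bound on $f(x,\cdot)$ gives $g_u(x,y)\geq f(x,u) - C\epsilon\abs{y} + \tfrac{1}{2}\abs{u}^2 - C\epsilon\abs{u}$, which is bounded below as $u$ varies, so the infimum $y\mapsto S_2 f(x,y)$ is nowhere $-\infty$. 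The closure of $\mathcal{F}(C,\epsilon)$ under infima of families bounded below at a point (the remark made just after the definition) then gives $y\mapsto S_2f(x,y)\in\mathcal{F}(C,\epsilon)$.

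For the first variable, fix $y \in \R^d$. For each $u$, the map $x\mapsto g_u(x,y)$ is a convex combination of $x\mapsto f(x,y+u)$ and $x\mapsto f(x,-y+u)$, plus a constant; both of these are in $\mathcal{F}_{n-1}(C,\epsilon)$ by the defining property of $\mathcal{F}_n(C,\epsilon)$. One verifies by induction on $n$ that all the relevant closure properties of $\mathcal{F}(C,\epsilon)$ --- convexity, invariance under addition of a constant, and stability under pointwise infima of families bounded below somewhere --- pass to $\mathcal{F}_n(C,\epsilon)$, because the conditions (i) and (ii) defining these classes are imposed one coordinate at a time. Hence $x\mapsto g_u(x,y)\in\mathcal{F}_{n-1}(C,\epsilon)$, and exactly the same lower bound as above ensures that $x\mapsto S_2f(x,y)=\inf_u g_u(x,y)$ is finite, so it lies in $\mathcal{F}_{n-1}(C,\epsilon)$.

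The only subtle point is the inductive propagation of the closure properties from $\mathcal{F}(C,\epsilon)$ to $\mathcal{F}_n(C,\epsilon)$; once that is noted, neither verification encounters any genuine obstacle, and the proof is essentially a bookkeeping argument around the identity $S_2 f = \inf_u g_u$.
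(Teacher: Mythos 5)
Your proof is correct and follows essentially the same route as the paper: write $S_2 f(x,y)=\inf_u g_u(x,y)$ and invoke the stability of $\mathcal{F}(C,\epsilon)$ (resp.\ $\mathcal{F}_{n-1}(C,\epsilon)$) under translations, reflections, convex combinations and infima of families bounded below. You merely make explicit the inductive transfer of these closure properties to $\mathcal{F}_{n-1}(C,\epsilon)$, which the paper passes over with a ``similarly''.
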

\begin{proof} It follows from the stability properties of the class $\mathcal{F} (C,\epsilon)$ that we mentionned earlier: for every $x \in E_1$ and $u\in E_2$, the function
\[ y \mapsto \tfrac{1}{2} 
   \bigl( f(x,u+y)+f(x,u-y) \bigr) + \tfrac{1}{2} \abs{u}^2  \]
is in $\mathcal{F} (C,\epsilon)$ so the same is true for the infimum over $u$, provided that this infimum is not $-\infty$, which is clear, since $\tfrac{1}{2} \bigl( f(x,u+y)+f(x,u-y) \bigr)$ is lipschitz in $u$. Similarly, for any $y,u \in E_2$, the function
\[ x \mapsto  \tfrac{1}{2} 
   \bigl( f(x,u+y)+f(x,u-y) \bigr) + \tfrac{1}{2} \abs{u}^2  \]
is in $\mathcal{F}_{n-1} (C,\epsilon)$ and the same is true for the infimum over $u$.
\end{proof} 
The next lemma is a tensorisation of Lemma~\ref{petitepsilon}, with the same constant $K$. 
\begin{lem}
\label{tensorisation}
Let $X_1,\dotsc,X_n$ be independent copies of $X$ and let
$f$ be \emph{even} and in $\mathcal{F}_n (C,\epsilon)$. Then
\[ \E  \e^{ - f( X_1,\dotsc,X_n ) } 
  \E  \e^{ H f(X_1,\dotsc,X_n) } 
  \leq  ( 1 + K \epsilon^3) ^n . \]
\end{lem}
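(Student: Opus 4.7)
The plan is an induction on $n$. The base case $n=1$ is exactly Lemma~\ref{petitepsilon}, since $\mathcal{F}_1(C,\epsilon)=\mathcal{F}(C,\epsilon)$. For the inductive step I would write points of $(\R^d)^n$ as $(x,y)$ with $x\in (\R^d)^{n-1}$ and $y\in\R^d$, denote by $H_2$ the inf-convolution with $\frac{c}{2}$ in the $y$-variable alone, by $H_1$ the inf-convolution in the $x$-variables, and exploit the factorisation $H=H_1H_2$.

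As a preliminary step I would symmetrise in the last variable. By Lemma~\ref{detail3} and Lemma~\ref{detail1}, replacing $f$ with $S_2 f$ only increases both factors $\int \e^{-f}\,d\gamma_d^{\otimes n}$ and $\int \e^{Hf}\,d\gamma_d^{\otimes n}$ while keeping the function in $\mathcal{F}_n(C,\epsilon)$; a direct check using the evenness of $\gamma_d$ and of $f$ shows that $S_2 f$ is still globally even, and it is by construction even in $y$ for each fixed $x$. Hence I may assume from the start that $y\mapsto f(x,y)$ is even for every $x$.

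Now introduce two auxiliary functions on $(\R^d)^{n-1}$:
\[
\e^{-\phi(x)} = \int \e^{-f(x,y)}\,d\gamma_d(y),
\qquad
\e^{\psi(x)} = \int \e^{H_2 f(x,y)}\,d\gamma_d(y).
\]
Applying Lemma~\ref{detail2} in each of the first $n-1$ coordinates separately gives $\phi\in\mathcal{F}_{n-1}(C,\epsilon)$; the substitution $y\mapsto -y$ together with the global evenness of $f$ shows $\phi$ is even. Moreover, Lemma~\ref{petitepsilon} applied to the even fibre $y\mapsto f(x,y)\in\mathcal{F}(C,\epsilon)$ yields $\e^{\psi(x)-\phi(x)}\leq 1+K\epsilon^3$ pointwise, hence $H\psi\leq H\phi+\log(1+K\epsilon^3)$ on $(\R^d)^{n-1}$.

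The step I expect to be the main obstacle is comparing $\int \e^{Hf}\,d\gamma_d^{\otimes n}$ with an integral of $H\phi$ on $(\R^d)^{n-1}$. The key remark is that the factorisation $H=H_1H_2$ gives the pointwise bound $H_2 f(x+u,y)+\tfrac{1}{2}\abs{u}^2\geq Hf(x,y)$ for every $u\in\R^d$. Exponentiating and integrating in $y$ yields $\psi(x+u)+\tfrac{1}{2}\abs{u}^2\geq \log\int \e^{Hf(x,y)}\,d\gamma_d(y)$, and taking the infimum over $u$ gives $\e^{H\psi(x)}\geq \int \e^{Hf(x,y)}\,d\gamma_d(y)$. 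Combined with the previous paragraph, Fubini yields
\[
\int \e^{Hf}\,d\gamma_d^{\otimes n}\leq (1+K\epsilon^3)\int \e^{H\phi}\,d\gamma_d^{\otimes(n-1)},
\]
while by construction $\int \e^{-f}\,d\gamma_d^{\otimes n}=\int \e^{-\phi}\,d\gamma_d^{\otimes(n-1)}$. Multiplying these and invoking the induction hypothesis on the even function $\phi\in\mathcal{F}_{n-1}(C,\epsilon)$ closes the induction with the same constant $K$.
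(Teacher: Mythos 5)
Your argument is correct and rests on the same three pillars as the paper's proof --- induction on $n$, the preliminary symmetrisation $f\mapsto S_2 f$ via Lemmas~\ref{detail3} and~\ref{detail1}, and Maurey's inf-convolution tensorisation --- but you run the tensorisation in the mirror direction. The paper integrates out the first $n-1$ coordinates, so its marginal $\phi$ lives on $E_2=\R^d$ and receives the one-dimensional Lemma~\ref{petitepsilon}, while the induction hypothesis is applied fibrewise to the even functions $g^y\in\mathcal{F}_{n-1}(C,\epsilon)$ and converted, via $H_1 g(x,y+u)+\tfrac{1}{2}\abs{u}^2\geq Hg(x,y)$, into the bound $\E \e^{Hg(\tilde X,y)}\leq \e^{H\phi(y)}(1+K\epsilon^3)^{n-1}$. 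You instead integrate out the last coordinate: Lemma~\ref{petitepsilon} is applied fibrewise in $y$ (this is exactly where the evenness of the fibres produced by $S_2$ is needed), and the induction hypothesis is reserved for the single even marginal $\phi\in\mathcal{F}_{n-1}(C,\epsilon)$. Both routes cost one factor $(1+K\epsilon^3)$ per coordinate, and both need Lemma~\ref{detail2} to place $\phi$ in the right class (you invoke it once per coordinate of $(\R^d)^{n-1}$, the paper once on $\R^d$; it applies verbatim either way). Your variant is marginally cleaner in that the induction hypothesis is used for one function rather than uniformly over a family indexed by $y+u$. Two cosmetic points: in the inequality $H_2 f(x+u,y)+\tfrac{1}{2}\abs{u}^2\geq Hf(x,y)$ the shift $u$ ranges over $(\R^d)^{n-1}$, not $\R^d$; and the global evenness of $S_2 f$ needs only the evenness of $f$, whereas the symmetry of $\gamma_d$ enters later, when you check that $\phi$ is even.
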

\begin{proof} It is done by induction. When $n=1$, it is Lemma~\ref{petitepsilon}. Let $n\geq 2$, we assume that the resluts holds for $n-1$. Let $f\colon (\R^d)^n \rightarrow \R$ be even and in $\mathcal{F}_n (C,\epsilon)$. Set $E_1 = (\R^d)^{n-1}$, $E_2=\R^d$ and $\tilde{X}=(X_1,\dotsc,X_{n-1})$, it is a normal vector on $E_1$. We have to prove
\[  \E  \e^{ - f( \tilde{X},X_n ) }  \E  \e^{Hf(\tilde{X},X_n)} 
                     \leq  (1+K\epsilon)^n . \]
Let $g = S_2 f$. We use Lemma~\ref{detail3}, for every $x\in E_1$ we have: $\E   \e^{- f (x,X_n)} \leq \E  \e^{-S_2 f (x,X_n) }$, which implies that
\[ \E  \e^{-f ( \tilde{X},X_n ) }  \leq 
              \E  \e^{ - S_2 f (\tilde{X},X_n) } . \]
On the other hand, the second part of Lemma~\ref{detail3} gives
\[ \E  \e^{H f (\tilde{X},X_n)}  \leq 
             \E  \e^{ H S_2 f (\tilde{X},X_n) } . \]
Hence it is enough to prove that
\begin{equation}
\label{aaa}  
\E  \e^{ - g(\tilde{X},X_n) }  \E  \e^{ H g (\tilde{X},X_n) } 
            \leq  (1+K\epsilon)^n . 
\end{equation}
By Lemma~\ref{detail1}, we have $g\in \mathcal{F}_n(C,\epsilon)$. By definition of $S_2$, for every $x\in E_1$, the function $y\mapsto g(x,y)$ is even. But as $g$ is globally even we have also $g(-x,y)=g(x,-y)=g(x,y)$. Hence, for any $y\in E_2 $, the function $g^y\colon x\rightarrow g(x,y)$ is even and in $\mathcal{F}_{n-1} (C,\epsilon)$. By the induction assumption 
\begin{equation}
\label{hypind}
 \E  \e^{- (g^y) ( \tilde{X} ) }
 \E  \e^{H  (g^y) ( \tilde{X} ) } 
  \leq  (1+ K  \epsilon^3)^{n-1} . 
\end{equation}
The following computation can be found in \cite{propriete-tau}, we recall it for the sake of completeness. We define the operator $H_1$ by 
\[ H_1 g(x,y) = H (g^y) (x) = \inf_{u\in E_1} \bigl( g(x+u,y) +
                                \tfrac{1}{2} \abs{u}^2 \bigr) . \]
We define a new function $\phi$ by
\[ \e^{-\phi(y)}  =  \E  \e^{- g ( \tilde{X} , y ) } . \]
For any $x\in E_1$, the function $y \rightarrow g(x,y)$ is even and belongs to $\mathcal{F}(C,\epsilon)$. By Lemma~\ref{detail2}, the same is true for $\phi$. Hence we can apply Lemma~\ref{petitepsilon} to get
\begin{equation}
\label{bbb}
 \E  \e^{ - \phi ( X_n ) } 
   \E  \e^{ H \phi ( X_n ) } 
    \leq   1 + K  \epsilon^3  .
\end{equation}
On the other hand, inequality \eqref{hypind} implies that for any $y$ and $u$ in $\R^d$ we have
\begin{equation}
\label{ccc}
 \E  \e^{ H_1 g ( \tilde{X} , y+u ) 
         + \frac{1}{2} \abs{u}^2 }  \leq 
\e^{\phi(y+u)+\frac{1}{2} \abs{u}^2 }  ( 1+ K  \epsilon^3 )^{n-1} .
\end{equation}
Let $(x,y)\in E_1\times E_2$. For any $u \in \R^d$ we have $H_1 g ( x,y+u ) + \frac{1}{2} \abs{u}^2 \geq H g (x,y)$. Hence, taking the infimum over $u$ in \eqref{ccc}, we get
\begin{equation}
\label{etape1}
 \E  \e^{ H g ( \tilde{X},y) }
        \leq  \e^{ H \phi (y) } 
 \bigl( 1+ K  \epsilon^3 \bigr)^{n-1} ,
\end{equation}
which of course implies that
\begin{equation}
\label{ddd}
\E  \e^{ H  g ( \tilde{X}, X_n) }  \leq  \E  \e^{ H \phi (X_n) } 
        \bigl( 1+ K  \epsilon^3 \bigr) ^{n-1} . 
\end{equation}
Combining \eqref{ddd} with \eqref{bbb} yields \eqref{aaa} and the proof is complete.
\end{proof}
\section{Proof of Theorem~\ref{principal}}
Let $n\in \N$, and $f$ be an even function from $\R^d$ to $\R$ belonging to $\mathcal{F}(C,1)$. Set
\[
g\colon 
 (x_1,\dotsc,x_n) \in (\R^d)^n \mapsto  f \bigl(\tfrac{1}{\sqrt{n}} (x_1+\dotsb+x_n) \bigr).
\]
It is clear that $g$ is even and belongs to $\mathcal{F}_n( C , \frac{1}{ \sqrt{n} } )$. Applying Lemma~\ref{tensorisation}, we obtain
\begin{equation}
 \label{g-ineq}
 \E \e^{ - g ( X_1,\dotsc,X_n ) } 
  \E  \e^{ H g (X_1,\dotsc,X_n) } 
  \leq  ( 1 + K  \tfrac{ 1 }{ n^{3/2} }  )^n \leq
                                 1 + \tfrac{K'}{\sqrt{n}}. 
\end{equation}
The convexity of the square of the norm implies that
\begin{align*}
 H g (x_1,\dotsc,x_n)  
    & =   \inf_{u_1,\dotsc,u_n}   
 f \bigl(  \tfrac{1}{ \sqrt{n} } {\textstyle \sum} (x_i+u_i) \bigr)
       +   \tfrac{1}{2}  {\textstyle \sum} \abs{u_i}^2    \\
 & \geq    \inf_{u_1,\dotsc,u_n}   
 f \bigl(  \tfrac{1}{ \sqrt{n} } {\textstyle \sum} x_i 
       + \tfrac{1}{ \sqrt{n} } {\textstyle \sum} u_i  \bigr)  
+ \tfrac{1}{2}  \bigl\lvert \tfrac{1}{ \sqrt{n} }  {\textstyle \sum} u_i \bigr\rvert^2, \\
 & = H f \bigl( \tfrac{1}{ \sqrt{n} } {\textstyle \sum} x_i \bigr) .
\end{align*} 
We combine this inequality with \eqref{g-ineq}, since 
the random vector $\frac{1}{\sqrt{n}}\sum X_i$ has the same law as $X$, we obtain
\[ \E  \e^{ - f( X ) }  \E  \e^{ H f ( X ) }  \leq 
    1 + \tfrac{ K' }{ \sqrt{n} }  . \]
Now we let $n\rightarrow \infty$ and we get the result for $f$. Hence we have the inequality for any even $f$ in $\mathcal{F}(C,1)$.
But this holds for any $C$, so we can take $C$ as big as we want. In particular, we get the result for any even and $\mathcal{C}^2$ function
with compact support. And a density argument shows that the inequality is valid for any function that is even, continuous and bounded from below.
\section*{Remarks}
It is also possible to derive the usual property~($\tau$) for the Gaussian measure from the Poincar\'e inequality. The function $f$ is not
assumed to be even anymore, we use the non-symmetric version of Lemma~\ref{petitepsilon} and then perform the tensorisation
(just as in Section~\ref{sectens} but without the symmetrisation). Of course, this is a bit more complicated than the usual proof (using the Pr\'ekopa-Leindler inequality) but we find it interesting to see that property~($\tau$) has to
do with the first eigenvalue of the Laplacian whereas its symmetric version has to do with the second eigenvalue.  \\
Unfortunately we are not able to go farther: what happens if $f$ is orthogonal to constants, linear functionals and polynomials of degree $2$?
Actually even in the case of a function that is orthogonal to constants and linear functionals our method does not give the result: we had to
suppose that our function was even, which is stronger. \\
Lastly, following Klartag \cite{klartag}, we explain how to extend the symmetric property~($\tau$) to measures that are
even and log-concave with respect to the Gaussian. It is well known that property~($\tau$) is stable under $1$-Lipschitz image. Clearly, the symmetric property~($\tau$) will have the following stable under pushforward by a $1$-Lipschitz odd map. For example, 
it was proved by Caffarelli (see \cite{caffarelli}) that if a probability measure $\mu$ is log-concave with respect to the Gaussian measure -- meaning that $d\mu = \e^{-V} d \gamma$ for some $V$ convex -- then the Brenier map transporting the Gaussian measure to $\mu$ is $1$-Lipschitz. Now suppose additionally that $\mu$ is even, then the Brenier map is automatically odd so $\mu$ is the pushfoward of the Gaussian measure by a $1$-Lipschitz odd map. Therefore, probability measures that are even and log-concave with respect 
to the Gaussian measure, in particular restrictions of the Gaussian measure to symmetric convex bodies, satisfy the symmetric property~($\tau$).
\providecommand{\bysame}{\leavevmode\hbox to3em{\hrulefill}\thinspace}


\begin{thebibliography}{10}

\bibitem{akm}
S.~Artstein, B.~Klartag, and V.~Milman, \emph{The {S}antal\'o point of a
  function, and a functional form of {S}antal\'o inequality}, Mathematika
  \textbf{51} (2005), 33--48.

\bibitem{these-ball}
K.~Ball, \emph{Isometric problems in $\ell_p$ and sections of convex sets},
  Ph{D} dissertation, {U}niversity of {C}ambridge, 1986.

\bibitem{ball}
K.~Ball, \emph{An elementary introduction to modern convex geometry}, in Flavors
  of geometry, edited by S.~Levy, Cambridge {U}niversity {P}ress, 1997.

\bibitem{berger}
M.~Berger, \emph{Geometry}, vol. I-II, translated from the {F}rench by {M}.~{C}ole and {S}.~{L}evy, Universitext,  Springer, 1987.

\bibitem{caffarelli}
L.A. Caffarelli, \emph{Monotonicity properties of optimal transportation and
  the {FKG} and related inequalities}, Comm. math. phys. \textbf{214} (2000),
  no.~3, 547--563.

\bibitem{B-conj}
D.~Cordero-Erausquin, M.~Fradelizi, and B.~Maurey, \emph{The ({B}) conjecture
  for the {G}aussian measure of dilates of symmetric convex sets and related
  problems}, J. {F}unct. {A}nal. \textbf{214} (2004), 410--427.

\bibitem{fm}
M.~Fradelizi and M.~Meyer, \emph{Some functional forms of {B}laschke-{S}antal\'o inequality},
Math. Z., \textbf{256} (2007), no.~2, 379--395.

\bibitem{klartag}
B.~Klartag, \emph{Marginals of geometric inequalities}, Geometric Aspects of
  Functional Analysis, Lecture {N}otes in {M}ath. 1910, Springer, 2007,
  pp.~133--166.

\bibitem{ledoux}
M.~Ledoux, \emph{The concentration of measure phenomenon}, Mathematical {S}urveys and Monographs, American Mathematical Society, 2001.

\bibitem{lz}
E.~Lutwak and G.~Zhang, \emph{Blaschke-{S}antal\'o inequalities}, J. Diff.
  Geom. \textbf{47} (1997), no.~1, 1--16.

\bibitem{propriete-tau}
B.~Maurey, \emph{Some deviation inequalities}, Geom. {F}unct. {A}nal.
  \textbf{1} (1991), no.~2, 188--197.

\bibitem{meyer-pajor}
M.~Meyer and A.~Pajor, \emph{On {S}antal\'o's inequality}, Geometric {A}spects
  of {F}unctional {A}nalysis, Lecture {N}otes in {M}ath. 1376, Springer, 1989,
  pp.~261--263.

\bibitem{pisier}
G.~Pisier, \emph{The volume of convex bodies and {B}anach space geometry},
  Cambridge Tracts in Mathematics, Cambridge {U}niversity {P}ress, 1989.

\bibitem{saint-raymond}
J.~Saint-Raymond, \emph{Sur le volume des corps convexes sym\'etriques},
  S\'eminaire d'initiation \`a l'Analyse \textbf{11} (1980-81).

\bibitem{santalo}
L.A. Santal\'o, \emph{Un invariante afin para los cuerpos convexos del espacio
  de $n$ dimensiones}, Portugaliae {M}ath. \textbf{8} (1949), 155--161.

\end{thebibliography}
\end{document}